\documentclass[12pt]{amsart}
\usepackage{amsmath, amssymb, latexsym, amsthm,bm}
\usepackage{mathtools}
\usepackage{url}
\usepackage[T1]{fontenc}
\usepackage{mathrsfs}
\usepackage{color}
\usepackage{tikz}
\usepackage{graphicx}
\usepackage{stmaryrd,amsbsy,enumerate}


\usepackage{mathscinet}
\usetikzlibrary{calc}
\usepackage{cancel}
\usepackage{xspace}
\usepackage{url}
\usepackage{longtable}
\usepackage[obeyFinal,colorinlistoftodos,textsize=tiny]{todonotes}

\usepackage{tikz}

\usepackage[font=small]{caption}\captionsetup[table]{skip=10pt}
\usepackage{mathtools,amsmath}
\usepackage{array}

\usepackage{tikz}

\usepackage{stmaryrd}

\addtolength{\textheight}{0.5cm}
\addtolength{\footskip}{\baselineskip}




\def\tilp{{\widetilde{p}}}

\def\ucr#1{{\text{\rm cr}(#1)}}
\def\pcr#1{{\text{\rm cr}_P(#1)}}

\DeclarePairedDelimiterX{\infdivx}[2]{(}{)}{%
  #1\;\delimsize\|\;#2%
}

\def\Va{{v_{i+(\beta k + 1)}}}
\def\Vb{{v_{i+2k}}}
\def\Vc{{v_{i+\beta k}}}
\def\Vd{{u_{i+\alpha k}}}
\def\Ve{{u_{i+(\alpha k+1)}}}
\def\Vf{{u_{i+2k}}}
\def\Vg{{u_{i+1}}}
\def\Vh{{v_{i+1}}}
\def\Vi{{u_{i{-}2k}{=}u_{i{+}(2k{+}1)}}}
\def\Vj{{v_{i-2k}}}
\def\Vk{{{=}v_{i{+}(2k{+}1)}}}
\def\Vl{{u_{i-1}}}
\def\Vm{{u_{i-(\alpha k-1)}}}
\def\Vn{{v_{i-1}}}
\def\Vo{{u_{i-\alpha k}}}
\def\Vp{{v_{i-\beta k}}}
\def\Vq{{v_{i-(\beta k -1)}}}

\renewcommand{\descriptionlabel}[1]%
  {{\hglue -0.7 cm}\hspace{\labelsep}#1}

%



\def\ignore#1{{ }}

\def\floor#1{\lfloor{#1}\rfloor}

\definecolor{mygray}{gray}{0.8}

\def\real{{\mathbb R}}

\def\cc{{\mathscr C}}

\def\myfrac#1#2{{\genfrac{}{}{0pt}{}{#1}{#2}}}

\def\cir#1#2#3{{\cc{\hbox{\hglue #1 cm}\myfrac{#2}{#3}}}}

\def\centerarc[#1](#2)(#3:#4:#5){ \draw[#1] ($(#2)+({#5*cos(#3)},{#5*sin(#3)})$) arc (#3:#4:#5); } 







\def\conethree{\cir{-0.04}{1}{3}}

\def\cthreefive{\cir{-0.02}{3}{5}}

\def\c13{{\conethree}}
\def\c35{{\cthreefive}}


\newtheorem{theorem}{Theorem} 
\newtheorem{theorem*}{Theorem} 

\newtheorem{observation}[theorem]{Observation}

\newtheorem{claim}[theorem]{Claim}
\newtheorem{remark}[theorem]{Remark}

\theoremstyle{definition}

\addtolength{\oddsidemargin}{-.875in}
	\addtolength{\evensidemargin}{-.875in}
	\addtolength{\textwidth}{1.75in}

	\addtolength{\topmargin}{-.2in}
	\addtolength{\textheight}{0.2in}

\begin{document}



\title[Drawings of complete graphs in the projective plane]{Drawings of complete graphs in the projective plane}

\author[Arroyo]{Alan Arroyo}
\address{IST Austria, 3400 Klosterneuburg, Austria}
\email{\tt alanmarcelo.arroyoguevara@ist.ac.at}

\author[McQuillan]{Dan McQuillan}
\address{College of Science and Mathematics, Norwich University. Northfield, VT 05663}
\email{\tt dmcquill@norwich.edu}

\author[Richter]{R.~Bruce Richter}
\address{Department of Combinatorics and Optimization. University of Waterloo. Waterloo, ON Canada N2L 3G1}
\email{\tt brichter@uwaterloo.ca}

\author[Salazar]{Gelasio Salazar}
\address{Instituto de F\'\i sica, Universidad Aut\'onoma de San Luis Potos\'{\i}, SLP 78000, Mexico}
\email{\tt gsalazar@ifisica.uaslp.mx}

\author[Sullivan]{Matthew Sullivan}
\address{Department of Combinatorics and Optimization. University of Waterloo. Waterloo, ON Canada N2L 3G1}
\email{\tt m8sullivan@uwaterloo.ca}




\begin{abstract}
Hill's Conjecture states that the crossing number $\ucr{K_n}$ of the complete graph $K_n$ in the plane (equivalently, the sphere) is $\frac{1}{4}\floor{\frac{n}{2}}\floor{\frac{n-1}{2}}\floor{\frac{n-2}{2}}\floor{\frac{n-3}{2}}=n^4/64 + O(n^3)$. Moon proved that the expected number of crossings in a spherical drawing in which the points are randomly distributed and joined by geodesics is precisely $n^4/64+O(n^3)$, thus matching asymptotically the conjectured value of $\ucr{K_n}$. Let $\pcr{G}$ denote the crossing number of a graph $G$ in the projective plane. Recently, Elkies proved that the expected number of crossings in a naturally defined random projective plane drawing of $K_n$ is $(n^4/8\pi^2)+O(n^3)$. In analogy with the relation of Moon's result to Hill's conjecture, Elkies asked if $\lim_{n\to\infty} \pcr{K_n}/n^4=1/8\pi^2$. We construct drawings of $K_n$ in the projective plane that disprove this.
\end{abstract}

\maketitle

\section{Introduction}

Hill's conjecture~\cite{HararyHill} states that the crossing number $\ucr{K_n}$ of the complete graph $K_n$ in the plane (equivalently, the sphere) is $\frac{1}{4}\floor{\frac{n}{2}}\floor{\frac{n-1}{2}}\floor{\frac{n-2}{2}}\floor{\frac{n-3}{2}}$. This has been verified only for $n\le 12$~\cite{Guy10,PanRichter}.

An elementary counting argument shows that $\bigl\{\ucr{K_n}/{n^4}\bigr\}_{n=1}^\infty$ is a non-decreasing sequence. {It is easy to see that a drawing of $K_n$ in which the vertices are in convex position and each edge is a straight segment has $\binom{n}{4}$ crossings, and so $\ucr{K_n}\le \binom{n}{4}$. Combining these two observations it follows that the} {\em plane constant\,} $c_{\real^2}:=\lim_{n\to\infty} \ucr{K_n}/n^4$ exists. The asymptotic version of Hill's conjecture then reads
\[
c_{\real^2}\stackrel{?}{=} \frac{1}{64}.
\]

In an interesting development, Moon~\cite{moon} proved that the expected number of crossings in a spherical drawing in which the points are randomly distributed and joined by geodesics is $\frac{n^4}{64}+O(n^3)$. Inspired by this exact match of the leading term (the coefficient of $n^4$) in Hill's conjectured value, it makes sense to consider models of random drawings of the complete graph in other surfaces. In this paper we concern ourselves with the crossing number $\pcr{K_n}$ of $K_n$ in the projective plane $P$.

\subsection{Drawings of $K_n$ in the projective plane}
Recently, Elkies~\cite{elkies} described a natural extension of Moon's model in the projective plane. We quote: {\sl ``Place $v$ points $\tilp_i$ randomly on the sphere $S$, and let $p_i$ be the corresponding points on $P$. Connect each pair of points $p_i,p_j$ with a shortest path, which is the image on $P$ of the great-circle arc on $S$ joining $\tilp_i$ to either $\tilp_j$ or $-\tilp_j$ [the antipodal point of $\tilp_j$], whichever is shorter. We show [\ldots] that any two such paths that do not share an endpoint cross with probability $1/\pi^2$.''}

The same argument as for the plane crossing number $\ucr{K_n}$ of $K_n$ shows that the {\em projective plane constant} $c_P:=\lim_{n\to\infty} \pcr{K_n}/n^4$ exists. Elkies's natural random model for drawing $K_n$ in the projective plane in particular implies that
\begin{equation}\label{eq:elk1}
c_P\le \frac{1}{8\pi^2},
\end{equation}
and he asked if ${1}/{8\pi^2}$ might be the actual value of $c_P$. (We note that Elkies investigated the number $\lim_{n\to\infty} 8\pcr{K_n}/n^4=8 c_P$, and asked if $8c_P=1/\pi^2$.) 

 Keeping in mind Moon's success of finding in his random model the same leading coefficient $1/64$ as in Hill's conjecture, indeed it makes sense to wonder:
\begin{equation}\label{eq:elkies}
c_P\stackrel{?}{=} \frac{1}{8\pi^2}.
\end{equation}

Elkies's result \eqref{eq:elk1} improves the upper bound $c_P<13/1028$, obtained by Koman~\cite{koman}. As Elkies points out, his bound is about $0.25\%$ better than Koman's bound.

\subsection{Our main result}

Using a refinement of Koman's construction, we show that equality does not hold in~\eqref{eq:elkies}:

\begin{theorem}\label{thm:main}
\[
c_P <  \frac{1}{8\pi^2}.
\]
\end{theorem}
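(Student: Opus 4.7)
The plan is to exhibit, for each sufficiently large $n$, an explicit drawing $D_n$ of $K_n$ in the projective plane whose number of crossings $\Cr(D_n)$ satisfies $\Cr(D_n) = c^\star n^4 + O(n^3)$ for some explicit constant $c^\star < 1/(8\pi^2)$. Since $\pcr{K_n} \le \Cr(D_n)$, dividing by $n^4$ and letting $n\to\infty$ then gives $c_P \le c^\star < 1/(8\pi^2)$, which is the theorem.

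First I would carefully recompute, from scratch, the leading $n^4$-coefficient of Koman's construction. Koman's drawing places the vertices in a highly symmetric pattern inside the standard disc model of the projective plane and draws each edge as a short curve, some of which pass through the crosscap. The symmetry allows the crossings to be partitioned into a small number of combinatorial types---classified by how each of the two edges is routed through the disc and the crosscap---and summed in closed form. Reproducing this enumeration in detail is important, since our refinement will modify the Koman template in a controlled way and must recount the same types together with any new types produced by the modification.

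Next I would introduce the refinement itself. The idea is to identify a sub-family of edges in Koman's drawing that are ``wasteful'' in crossings---that is, whose contribution to the $n^4$-coefficient exceeds what is necessary---and either reroute them using the non-orientability of $P$, or replace a local subregion by a denser, locally optimal patch. A natural candidate for the latter is a small Hill-type planar drawing of some $K_m$ inserted into a disc region that is crossed by only $O(n^3)$ external pairs of edges, so that the patch changes the leading coefficient by a computable amount while contributing no $n^4$-term from its interaction with the rest of the drawing. With the modification fixed, I would rerun the crossing enumeration type by type, track how each $n^4$-coefficient changes, and collect the net effect as a closed-form savings $\Delta>0$, so that $c^\star = c_{\mathrm{Koman}} - \Delta$.

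The principal obstacle is that $1/(8\pi^2)$ lies very close to Koman's bound, so $\Delta$ must be small yet provably positive. This leaves essentially no slack in the computation: every $n^4$-term must be accounted for exactly, and every $O(n^3)$ error term must be rigorously controlled. I expect that once the coefficients are in hand the final verification of $c^\star < 1/(8\pi^2)$ will reduce to a single explicit numerical inequality checkable by hand or with computer algebra, but the real technical work lies upstream: designing a modification whose net savings are large enough to push the constant below $1/(8\pi^2)$ while keeping the resulting crossing count combinatorially tractable at leading order.
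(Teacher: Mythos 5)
Your overall skeleton is the right one and matches the paper's: exhibit explicit drawings $D_n$ with $\Cr(D_n)=c^\star n^4+O(n^3)$ for an explicit $c^\star<1/(8\pi^2)$, then conclude via the monotonicity of $\pcr{K_n}/n^4$. But the proposal stops exactly where the proof has to begin: you never specify the modification of Koman's drawing, and the one concrete mechanism you do offer cannot work. A ``locally optimal patch'' inserted into a disc region that interacts with the rest of the drawing in only $O(n^3)$ crossing pairs necessarily involves $o(n)$ vertices: if the patch contained $m=\Theta(n)$ vertices, the $\Theta(n^2)$ edges leaving it would form $\Theta(n^4)$ pairs with the outside edges, all of which would have to be controlled, not $O(n^3)$ of them. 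But a patch on $o(n)$ vertices changes the total crossing count by only $o(n^4)$ and therefore cannot move the leading coefficient at all. Any genuine improvement must be global, redistributing the adjacency pattern of a positive fraction of the vertices.

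That is what the paper does. It generalizes Koman's construction to a two-parameter family $D(\alpha,\beta,n)$ (for $n\equiv 2\pmod 8$ and $\alpha,\beta\in[0,2]$ with $\alpha(n-2)/8$ and $\beta(n-2)/8$ integers), built from three concentric circles of vertices, with $\alpha$ and $\beta$ controlling how many inner-circle vertices each middle-circle vertex (resp.\ each outer-circle vertex) reaches through each of the two annuli; Koman's drawing is the case $\alpha=\beta=1$. The crossings split into nine colour types, each counted exactly, giving $\pcr{D(\alpha,\beta,n)}=f(\alpha,\beta)\,n^4+O(n^3)$ for an explicit cubic polynomial $f$, and the single evaluation $f(1.1,1)<0.0126<1/8\pi^2\approx 0.012665$ finishes the proof. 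The paper also records the heuristic that predicts Koman's drawing is improvable: in it half the vertices have responsibility $\frac{19}{384}n^3+O(n^2)$ and the other half $\frac{20}{384}n^3+O(n^2)$, and a drawing family with asymptotically unequal responsibilities cannot be asymptotically optimal. To turn your plan into a proof you must replace the patch idea with an explicit global perturbation of this kind and carry out the full crossing enumeration; as written, the decisive construction and computation are missing.
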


The relevance of this result is not merely because we obtain a better upper bound for $c_P$, as our improvement over Elkies's bound is rather marginal. Its importance is that it shows that the natural extension of Moon's random model to the projective plane does not yield (asymptotically) optimal drawings of $K_n$ in the projective plane.

\subsection{Structure of the rest of this paper} In Section~\ref{sec:thedrawings} we describe, for each integer $n$ such that $n\equiv 2\pmod{8}$, and parameters $\alpha,\beta \in [0,2]$, a drawing $D(\alpha,\beta,n)$ of $K_n$ in the projective plane. (As we will see below, there are restrictions on $\alpha$ and $\beta$ that depend on $n$.) 

The proof of Theorem~\ref{thm:main} is in Section~\ref{sec:proofmain}. We start by giving the expression for the crossing number $\pcr{D(\alpha,\beta,n)}$ of $D(\alpha,\beta,n)$, for $\alpha,\beta\in[0,2]$ and $n\equiv 2\pmod{8}$. The calculations leading to this expression are totally elementary, if somewhat tedious. To avoid an interruption of the main discussion, these calculations are deferred to Section~\ref{sec:calculations}. To prove Theorem~\ref{thm:main}, we simply give explicit values $\alpha_0$ of $\alpha$ and $\beta_0$ of $\beta$ such that $\pcr{D(\alpha_0,\beta_0,n)}=c\cdot n^4 + O(n^3)$, for some $c < 1/8\pi^2$.

As we mentioned above, our drawings are a refinement of Koman's drawings in~\cite{koman}. In Section~\ref{sec:remarks} we explain why we could tell right away from Koman's calculations that his construction could be adapted to yield drawings with fewer crossings. Also in that section we describe further generalizations of our own construction.

\section{The drawings $D(\alpha,\beta,n)$ of $K_n$ in the projective plane}\label{sec:thedrawings}

In this section we describe a construction that yields drawings of $K_n$ in the projective plane, for $n\equiv 2 \pmod 8$. This is a generalization of Koman's construction in~\cite{koman}, and depends on two parameters $\alpha,\beta$ with $\alpha,\beta\in[0,2]$. (Koman's construction is obtained by setting $\alpha=\beta=1$.) As in~\cite{koman}, these drawings are obtained by first constructing a plane drawing, and then suitably identifying the sides of a polygon in this plane drawing. We start by describing this plane drawing, the {\em auxiliary model}.

\subsection{The auxiliary model $A(\alpha,\beta,n)$}

For the rest of the paper, $n\ge 10$ is a positive integer such that $n\equiv 2\pmod{8}$, and $k$ is the integer such that $n=8k+2$.

We construct the auxiliary model $A(\alpha,\beta,n)$ as follows. As illustrated in Figure~\ref{fig:440}, we start with three concentric circles $W,V$, and $U$. Place $4k+1$ vertices (the $w$-{\em vertices}) $w_0,\ldots,w_{4k}$ evenly distributed in the circle $W$, in this clockwise order, where $w_0$ is the highest point of $W$. Then place $4k+1$ vertices $v_0,\ldots,v_{4k}$ (the $v$-{\em vertices}) evenly distributed in the circle $V$, in this clockwise order, where $v_0$ is the lowest point of $V$. Finally, place $4k+1$ vertices $u_0,\ldots,u_{4k}$ (the $u$-{\em vertices}) evenly distributed in the circle $U$, placed in this clockwise order, where $u_0$ is the lowest point of $U$.

\begin{figure}[ht!]
\centering
\scalebox{0.45}{\input{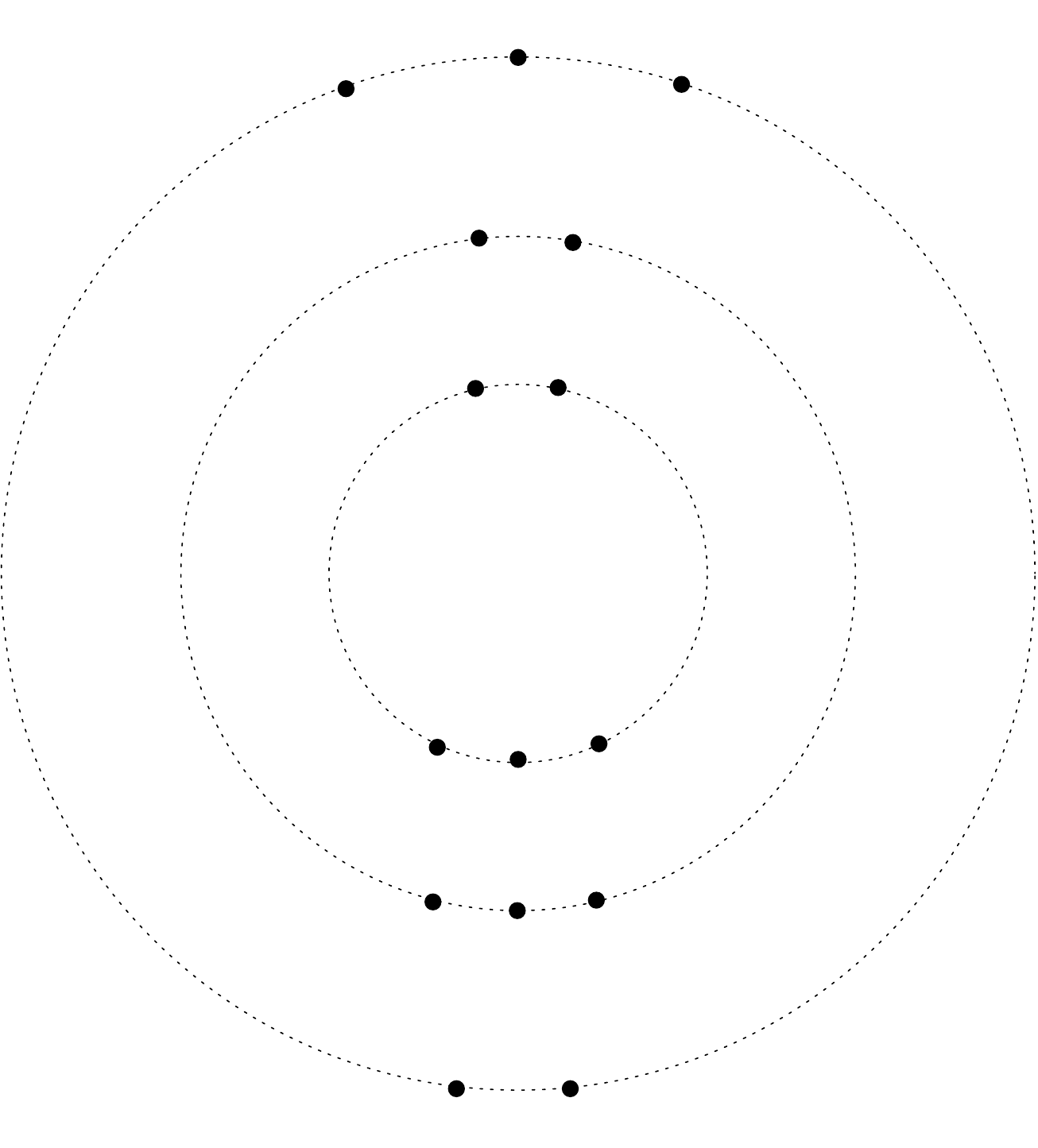_t}}
\caption{The layout of the vertices $w_i,v_i$, and $u_i$ for $i=0,\ldots,4k$ in the auxiliary model $A(\alpha,\beta,n)$.}
\label{fig:440}
\end{figure}

Now let $\alpha, \beta \in [0,2]$ be numbers such that $\alpha k$ and $\beta k$ are integers. (That is, $\alpha (n-2)/8$ and $\beta (n-2)/8$ are integers.) We construct the model plane drawing $A(\alpha,\beta,n)$ by joining some pairs of vertices in $\{w_0,\ldots,w_{4k},v_0,\ldots,v_{4k},$ $u_0,\ldots,$ $u_{4k}\}$ with edges, which we colour for easy reference. The reader may wish to take a look at Figure~\ref{fig:150} to get a flavour of the final product of the construction for the particular case $n=10$ and $\alpha=\beta=1$. 

We use $WV$ to denote the annulus bounded by $W$ and $V$. Similarly, $VU$ is the annulus bounded by $V$ and $U$, and $WU$ is the annulus bounded by $W$ and $U$.

We join pairs of vertices in $\{w_0,\ldots,w_{4k},v_0,\ldots,v_{4k},u_0,\ldots,u_{4k}\}$ with edges that we colour green, red, brown, blue, and black, according to the following rules. We emphasize that indices are read modulo $4k+1$. We refer the reader to Figure~\ref{fig:450}.

\begin{figure}[ht!]
\centering
\scalebox{0.8}{\input{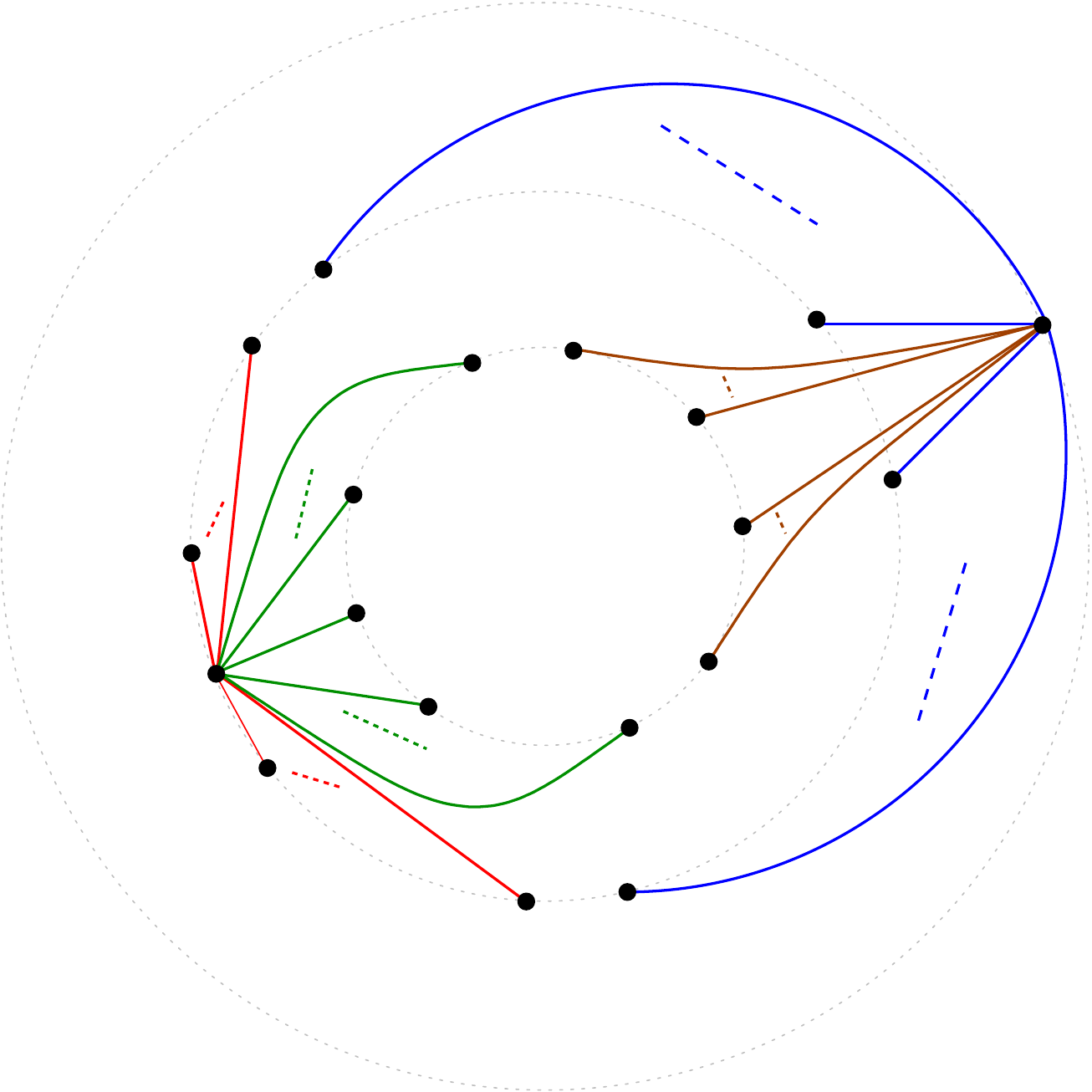_t}}
\caption{The green and red edges incident with $v_i$, and the brown and blue edges incident with $w_i$.}
\label{fig:450}
\end{figure}

\medskip
\noindent{\bf The green edges.} For each $i=0,\ldots,4k$, we join the vertex $v_i$ with the $2\alpha k +1$ $u$-vertices that are closest to $v_i$. That is, we join $v_i$ with $u_{i- \alpha k}, \ldots, u_{i- 1}, u_i, u_{i+ 1}, \ldots, u_{i+ \alpha k}$. These edges are green, and are drawn inside $VU$. 


\medskip
\noindent{\bf The red edges.} For each $i=0,\ldots,4k$, we join the vertex $v_i$ with the $2\beta k$ $v$-vertices that are closest to $v_i$. That is, we join $v_i$ with $v_{i- \beta k}$, $\ldots, v_{i- 1}, v_{i+ 1}, \ldots$, $ v_{i+ \beta k}$. These edges are red, and they are drawn inside $VU$. 

\medskip
\noindent{\bf The brown edges.} For each $i=0,\ldots,4k$, we join the vertex $w_i$ with the $2(2-\alpha) k$ $u$-vertices that are closest to $w_i$. That is, we join $w_i$ with $u_{i- (\alpha k -1)},\ldots,u_{i- 2k}{=},u_{i+ (2k+1)}, u_{i+ 2k},\ldots, u_{i+ (\alpha k +1)}$. These edges are brown, and they are drawn inside $WU$ so that they cross the straight segment that joins $v_{i+ 2k}$ and $v_{i+ (2k+1)}$.

\medskip
\noindent{\bf The blue edges.} For each $i=0,\ldots,4k$, we join the vertex $w_i$ with the $2(2-\beta) k$ $v$-vertices that are closest to $w_i$. That is, we join $w_i$ with $v_{i-(\beta k -1)},\ldots,v_{i-2k}{=}v_{i+(2k+1)},  v_{i+ 2k},\ldots,v_{i+( \beta k +1)}$. These edges are blue, and they are drawn inside $WV$. 

\medskip
\noindent{\bf The black edges.} For each $i,j=0,\ldots,4k$, $i\neq j$, we join the vertex $u_i$ with the vertex $u_j$ (say with a straight segment) inside the disk bounded by $U$.
\medskip

In Figures~\ref{fig:160} and~\ref{fig:175} we give illustrations for some specific values of $n,\alpha$, and $\beta$.

\begin{figure}[ht!]
\centering
\scalebox{0.52}{\input{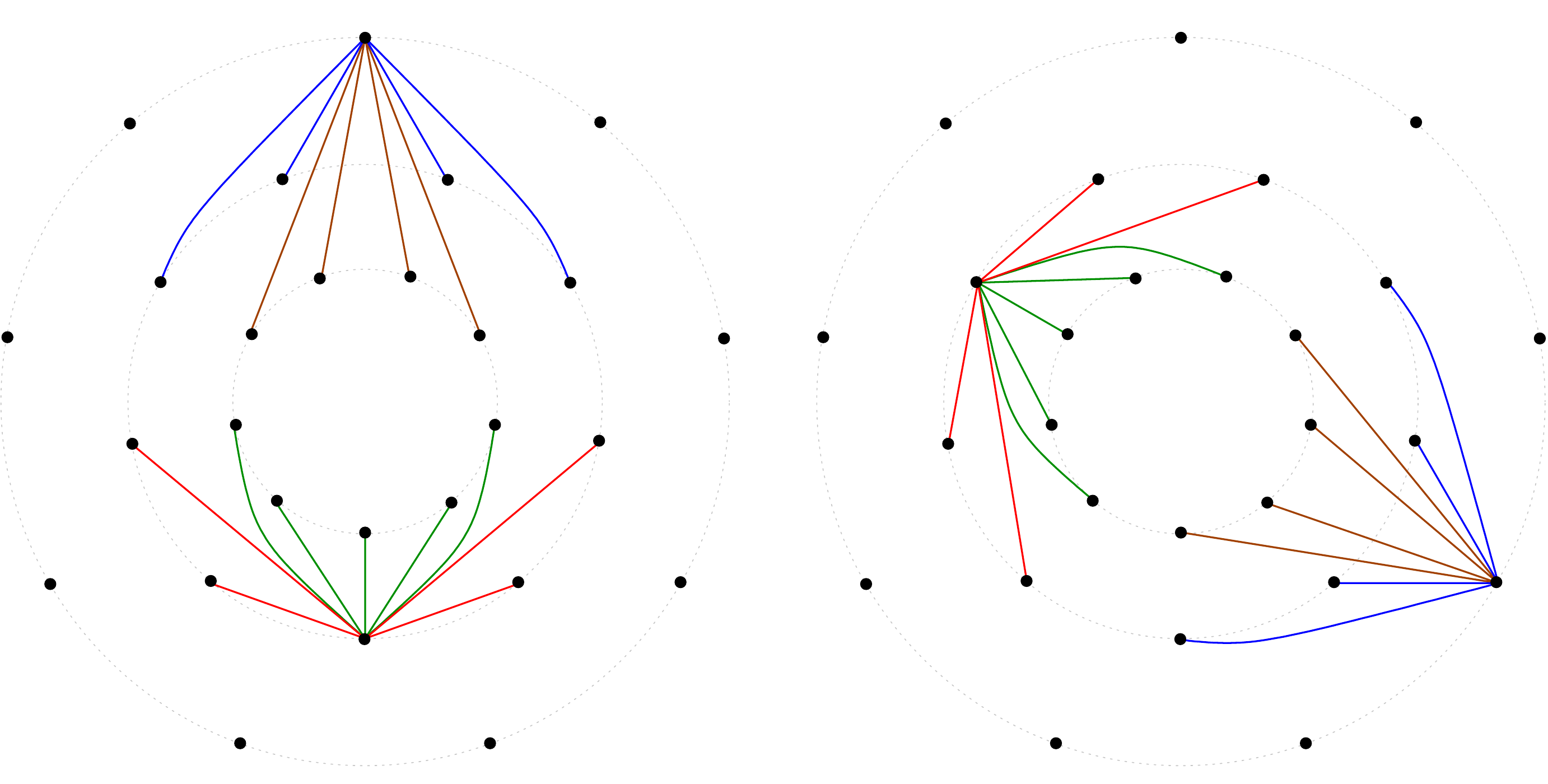_t}}
\caption{On the left hand side we illustrate the red and green edges incident with $v_0$, and the brown and blue edges incident with $w_0$, for the case $n=18$ (that is, $k=2$) and $\alpha=\beta=1$. On the right hand side we illustrate the red and green edges incident with $v_3$, and the brown and blue edges incident with $w_3$.}
\label{fig:160}
\end{figure}

\begin{figure}[ht!]
\centering
\scalebox{0.52}{\input{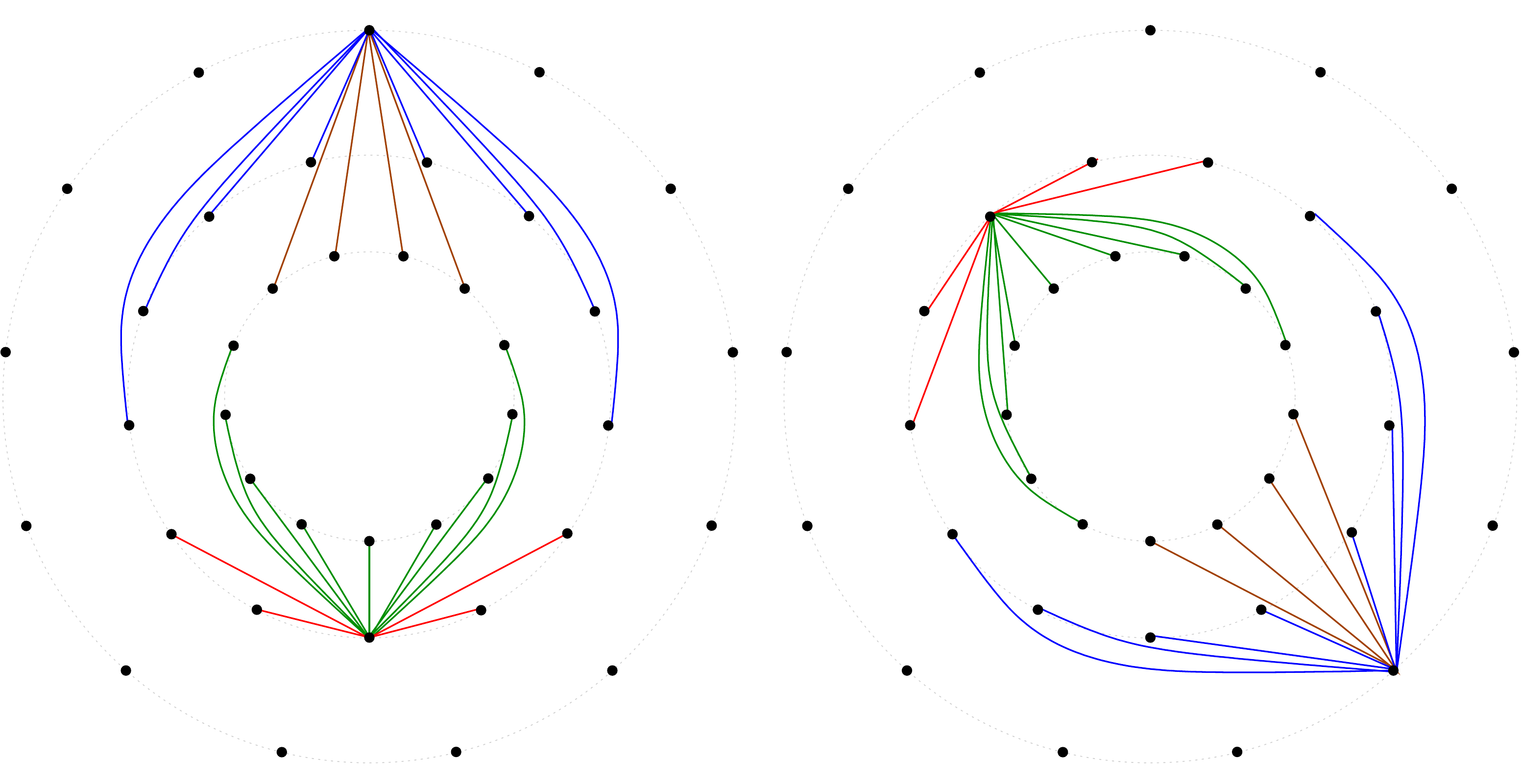_t}}
\caption{On the left hand side we illustrate the red and green edges incident with $v_0$, and the brown and blue edges incident with $w_0$, for the case $n=26$ (that is, $k=3$) and $\alpha=4/3$ and $\beta=2/3$. On the right hand side we illustrate the red and green edges incident with $v_5$, and the brown and blue edges incident with $w_5$.}
\label{fig:175}
\end{figure}

As a full example, in Figure~\ref{fig:150} we illustrate the model plane drawing $A(1,1,18)$, that is, the case in which $\alpha=\beta=1$ and $n=18$, and so $k=2$.

\begin{figure}[ht!]
\centering
\scalebox{0.8}{\input{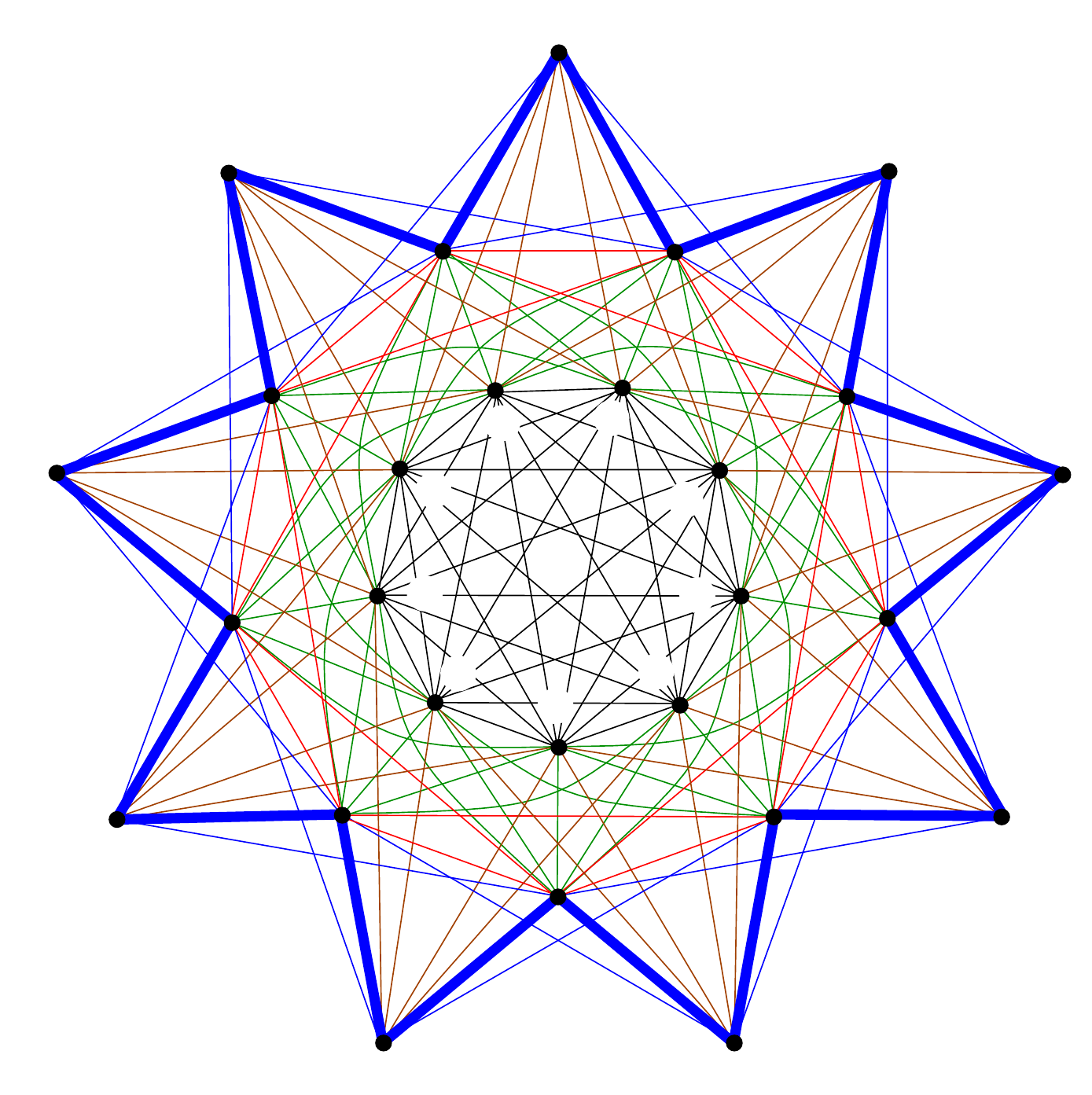_t}}
\caption{The auxiliary model drawing $A(1,1,18)$. Each pair of opposite edges and opposite vertices of the thick blue polygon are identified to obtain the drawing $D(1,1,18)$ of $K_{18}$ in the projective plane (the part outside this polygon is discarded).}
\label{fig:150}
\end{figure}

\subsection{The drawing $D(\alpha,\beta,n)$ of $K_n$ in the projective plane}

From the model plane drawing $A(\alpha,\beta,n)$ we obtain a drawing $D(\alpha,\beta,n)$ of $K_n$ in the projective plane as follows. First, we discard the part of $A(\alpha,\beta,n)$ outside the polygon ${P{=}}w_0 v_{2k+1} w_1 v_{2k+2} \cdots w_{4k} v_{2k}$. In the example in Figure~\ref{fig:150} this is the thick blue polygon $w_0 v_5 w_1 v_6 w_2 v_7 w_3 v_8 w_4 v_0 w_5 v_1 w_6 v_2 w_7 v_3 w_8 v_4 w_0$.  

Then we identify each pair of opposite edges and each pair of opposite vertices in the polygon $P$. That is, we identify the vertices $w_i$ and $v_i$ {into a vertex $(vw)_i$} for $i=0,\ldots,4k$, and also identify each edge $w_i v_j$ of $P$ with the edge $w_j v_i$. Thus the pairs of edges that get identified are the pair $(w_i v_{i+ 2k},v_i w_{i+ 2k})$ and the pair $(w_i v_{i+(2k+1)},v_i w_{i+(2k+1)})$, for $i=0,\ldots,4k$. In the example in Figure~\ref{fig:150}, for instance, we identify the edges $w_0 v_5$ and $w_5 v_0$.

It is worth noting that these identified edges contain blue-blue crossings (crossings of a blue edge with a blue edge), and so the blue edges in the model drawing $A(\alpha,\beta,n)$ must be drawn so that the identification of these crossings is possible.

{The result of this process is a drawing $D(\alpha,\beta,n)$ in the projective plane. It is not difficult to verify that $D(\alpha,\beta,n)$ is a drawing of $K_n$. First of all, $D(\alpha,\beta,n)$ has $n=8k+2$ vertices, namely the {\em $u$-vertices} $u_0,\ldots,u_{4k}$ and the {\em $vw$-vertices} $(vw)_0,\ldots,(vw)_{4k}$. Each pair of $u$-vertices is joined by a black edge, and each $u$-vertex is joined to each $vw$-vertex either by a green or by a brown edge. Finally, each pair of $vw$-vertices are joined either by a red edge or by a blue edge, where the blue edge may be the union of two segments of blue edges back in $A(\alpha,\beta,n)$.}

\section{Proof of Theorem~\ref{thm:main}}\label{sec:proofmain}

To prove Theorem~\ref{thm:main} we simply exhibit numbers $\alpha_0$ and $\beta_0$ such that the number $\pcr{D(\alpha_0,\beta_0,n)}$ of crossings in $D(\alpha_0,\beta_0,n)$ is $c\cdot n^4+O(n^3)$, for some $c < 1/8\pi^2$.

Calculating $\pcr{D(\alpha,\beta,n)}$ is a routine exercise, if somewhat tedious. In order to continue with the proof of Theorem~\ref{thm:main} we just give this number (asymptotically)  in the following statement, and devote the next section to the elementary calculations leading to it.

For the next claim, we recall that the drawing $D(\alpha,\beta,n)$ is defined for integers $n$ such that $n=8k+2$ for some integer $k$, and for those $\alpha,\beta\in[0,2]$ such that $\alpha k$ and $\beta k$ are integers, that is, $\alpha (n-2)/8$ and $\beta (n-2)/8$ are integers.

\begin{claim}\label{cla:number}
Let $n\equiv 2\pmod{8}$, and let $\alpha,\beta\in [0,2]$ be such that $\alpha(n-2)/8$ and $\beta(n-2)/8$ are integers. Then 
\[
\pcr{D(\alpha,\beta,n)} =f(\alpha,\beta)\cdot n^4 + O(n^3),
\]
where
{\small
\begin{align*}
f(\alpha,\beta):=\frac{1}{8^4}\cdot \biggl( 
& \frac{32}{3} + 
\frac{4\beta^3}{3} +  
{8(2-\alpha)(2-\beta)^2 + 
4\alpha \beta^2} + 
\frac{32}{3} - \frac{32}{3}(\alpha-1)^3 + 4(2-\alpha)\beta^2 + \frac{16}{3}\alpha^3 +
\\
 &   \frac{16}{3}(2-\alpha)^3 + \frac{8}{3}(2-\beta)^3
\biggr).
\end{align*}
}
\end{claim}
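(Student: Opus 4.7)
My plan is to compute $\pcr{D(\alpha,\beta,n)}$ by partitioning the crossings in the drawing according to the color pair of the two edges that meet. Of the $\binom{5}{2}+5=15$ possible color-pair types, most vanish for trivial reasons: black edges lie only inside the disk bounded by $U$ and so only cross other black edges; green and blue edges lie in the disjoint open annuli $VU$ and $WV$; and so on. This leaves a short list of contributing types, namely the five monochromatic ones together with a few mixed pairs (green-red inside $VU$, and brown-blue, brown-red, brown-green crossing the circle $V$). The aim is to count each type asymptotically and sum to recover the $n^4$ coefficient.

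For each contributing type I would exploit the $(4k+1)$-fold rotational symmetry of the construction. Fix a canonical edge (say $v_0 u_j$ or $v_0 v_j$ for a suitable $j$), count how many edges of the second color cross it, then sum over $j$ and multiply by $4k+1$, dividing by $2$ in the monochromatic case. In the auxiliary planar model $A(\alpha,\beta,n)$ each relevant subdrawing is essentially a collection of chords of bounded length on a cycle of $4k+1$ vertices, so the counts reduce to standard sums of the form $\sum \min(j_1,j_2)$ or $\sum j_1 j_2$ with indices bounded by $\alpha k$, $\beta k$, $(2-\alpha)k$, or $(2-\beta)k$. Each such sum is a cubic in $k$, and multiplying by $4k+1$ yields a quartic in $k=(n-2)/8$; the claimed remainder $O(n^3)$ then absorbs the lower-order terms and all $\pm 1$ integer-rounding effects arising because $\alpha k,\beta k\in\mathbb Z$ is only an integrality constraint, not a continuous parameter.

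Each of the ten summands in the displayed formula for $f(\alpha,\beta)$ should match exactly one enumerated crossing type. For instance, the first $\tfrac{32}{3}$ is the black-black count $\binom{4k+1}{4}\sim\tfrac{(4k)^4}{24}=\tfrac{32}{3}k^4=\tfrac{32}{3}(n/8)^4$. The four monochromatic cubics $\tfrac{16}{3}\alpha^3$, $\tfrac{16}{3}(2-\alpha)^3$, $\tfrac{4\beta^3}{3}$, $\tfrac{8}{3}(2-\beta)^3$ should correspond, in some order, to green-green, brown-brown, red-red, and blue-blue crossings, where the length bound on the relevant chords is $\alpha k$, $(2-\alpha)k$, $\beta k$, $(2-\beta)k$ respectively; the different scalar prefactors reflect whether the chord family is drawn in the interior or exterior sense within its host annulus. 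The bilinear terms such as $4\alpha\beta^2$, $4(2-\alpha)\beta^2$, and $8(2-\alpha)(2-\beta)^2$ should match the mixed color pairs that share an annulus.

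The delicate step, and the main obstacle, is the bookkeeping around the boundary polygon $P$. After discarding the part of $A(\alpha,\beta,n)$ outside $P$ and identifying opposite sides of $P$ to form the projective plane, some auxiliary crossings disappear while others arise, most notably the identified blue-blue crossings at the seam which the construction is explicitly arranged to accommodate. I would organize this by first writing $\ucr{A(\alpha,\beta,n)}$ as a color-pair sum, then subtracting the crossings that occur outside $P$ and adjusting for the identification along the seam. A naive count would suggest that these corrections only contribute $O(n^3)$ since the involved edges have length $\Theta(k)$, but the appearance of the signed cubic $-\tfrac{32}{3}(\alpha-1)^3$ in $f(\alpha,\beta)$ strongly suggests that an inclusion-exclusion correction, distinguishing the regimes $\alpha<1$ and $\alpha>1$, does contribute at leading order and must be tracked with care; verifying exactly when a near-boundary auxiliary crossing survives the identification (and is counted once, not twice) is where I expect the most attention to detail to be required.
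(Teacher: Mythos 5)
Your overall framework coincides with the paper's: classify crossings by the colour pair of the two edges involved (the same nine contributing types), use the $(4k+1)$-fold rotational symmetry to count each type in the auxiliary planar model $A(\alpha,\beta,n)$, and then account for the passage to $D(\alpha,\beta,n)$. The genuine gap is in your accounting, and it sits exactly where you predicted the difficulty would be --- but for the wrong reason. You match nine of the ten displayed summands of $f(\alpha,\beta)$ to crossing types (black--black, the four monochromatic cubics, and the three ``bilinear'' mixed terms), but you never assign the green--brown crossings to any term, and you instead conjecture that the signed cubic $-\tfrac{32}{3}(\alpha-1)^3$ arises as a leading-order inclusion--exclusion correction from clipping at the polygon $P$ and identifying its opposite sides. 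That is not where it comes from. The pair $\tfrac{32}{3}-\tfrac{32}{3}(\alpha-1)^3$ is precisely the asymptotic form of the green--brown count $2(4k+1)\bigl(\binom{2k+1}{3}-\binom{2(\alpha-1)k+1}{3}\bigr)$, computed entirely inside $A(\alpha,\beta,n)$: the subtracted binomial coefficient records the index triples for which the would-be brown edge at $w_0$ falls outside its allowed range $\{\alpha k+1,\dots,(4-\alpha)k\}$, and the sign change at $\alpha=1$ reflects how that range compares with the green range, not any boundary phenomenon.

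Correspondingly, your treatment of the seam should be replaced by the much simpler correct statement: because opposite edges of $P$ are identified so that the orders of crossings along them agree, every crossing type has the same count in $D(\alpha,\beta,n)$ as in $A(\alpha,\beta,n)$, with the single exception of blue--blue crossings, each of which occurs twice in $A(\alpha,\beta,n)$ (once inside $P$ and once outside, in identified positions), so that count is exactly halved. There is no other correction at any order, so hunting for a leading-order seam term would lead you either to double-count or to fail to close the books. Finally, the substance of the claim is the nine exact per-type counts themselves (e.g.\ $(1/4)(4k+1)\binom{2\beta k}{3}$ for red--red, $4k(4k+1)(2-\alpha)\binom{(2-\beta)k}{2}$ for blue--brown, and so on); your proposal describes the method for obtaining them but carries out none of these computations, and two of them (green--green and green--brown) require a genuine case split at $\alpha=1$ that your outline does not anticipate.
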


We remark that the expression $f(\alpha,\beta)$  clearly can be simplified, but we chose to present it in this way to allow for an easier verification: these terms match in the given order the terms in which we will later express the exact value of $\pcr{D(\alpha,\beta,n)}$ (see proof of Claim~\ref{cla:number} in the next section).

\begin{proof}[Proof of Theorem~\ref{thm:main}]

Let $n$ be an integer such that $n\equiv 2\pmod{80}$. (Thus in particular $n\equiv 2\pmod{8}$). Note that for each such $n$ we have that $1.1\cdot (n-2)/8$ is an integer, and obviously also $1 \cdot (n-2)/8$ is an integer, and so the drawing $D(1.1,1,n)$ exists. 

By Claim~\ref{cla:number},
\[
\pcr{D(1.1,1,n)}= f(1.1,1)\cdot n^4 + O(n^3)\, { < 0.0126 \cdot n^4} + O(n^3).
\]

An elementary calculation shows that $\bigl\{\pcr{K_n}/{n^4}\bigr\}_{n=1}^\infty$ is a non-decreasing sequence. Thus the previous expression implies that $c_P < 0.0126$. Since $0.0126 < 1/8\pi^2 \approx 0.012665$, it follows that $c_P < 1/8\pi^2$.
\end{proof}

\section{Proof of Claim~\ref{cla:number}: the number of crossings in $D(\alpha,\beta,n)$}\label{sec:calculations}

The crossings in $A(\alpha,\beta,n)$ (and hence the crossings in $D(\alpha,\beta,n)$) are of nine {\em types}: green-green, red-red, brown-brown, blue-blue, black-black, red-green, red-brown, blue-brown, and green-brown. Obtaining the number of crossings in $D(\alpha,\beta,n)$ from the crossings in $A(\alpha,\beta,n)$ is quite easy, in view of the following remark, which is an immediate consequence of the way we obtain $D(\alpha,\beta,n)$ from $A(\alpha,\beta,n)$.

\begin{remark}\label{rem:rem1}
The number of each type of crossings is the same in $D(\alpha,\beta,n)$ as in $A(\alpha,\beta,n)$, with the exception of the blue-blue crossings: the number of blue-blue crossings in $D(\alpha,\beta,n)$ is half the number of blue-blue crossings in $A(\alpha,\beta,n)$.
\end{remark}

To identify each pair of opposite edges and vertices on the polygon, the order of the edge crossings on the opposite edges must be the same. It follows that each blue-blue crossing outside the polygon $P$ near the vertices $w_{i-1},v_{i+2k},w_i$ can also be found inside the polygon $P$ near the vertices $v_{i-1}, w_{i+2k}, v_i$. Similarly, each blue-blue crossing on the polygon will happen twice, once at each edge in a pair of opposite edges. Therefore, the number of blue-blue crossings in $D(\alpha,\beta,n)$ is half the number of blue-blue crossings in $A(\alpha,\beta,n)$. 

Recall that $k$ is the integer such that $n=8k+2$. We have the following statements, whose proofs are deferred for the moment. (We shall only give the proofs of (A)--(G), as the proofs of (H) and (I) are totally analogous to the proof of (G).)

\medskip
\noindent (A) {\sl The number of black-black crossings in $A(\alpha,\beta,n)$ is $\binom{4k+1}{4}$.}
\medskip

\medskip
\noindent (B) {\sl The number of red-red  crossings in $A(\alpha,\beta,n)$ is $(1/4)(4k+1)\binom{2\beta k}3$.}
\medskip

\medskip
\noindent (C) {\sl The number of blue-brown crossings in $A(\alpha,\beta,n)$ is $4k(4k+1)(2-\alpha)\binom{(2-\beta)k}2$.}
\medskip

\medskip
\noindent (D) {\sl The number of red-green crossings in $A(\alpha,\beta,n)$ is $(4k+1)(2\alpha k+1)\binom{\beta k}2$.}
\medskip

\medskip
\noindent (E) {\sl The number of green-brown crossings in $A(\alpha,\beta,n)$ is $2(4k+1)\left(\binom{2k+1}3-\binom{2(\alpha-1)k+1}3\right)$.}
\medskip

\medskip
\noindent (F) {\sl The number of red-brown crossings in $A(\alpha,\beta,n)$ is ${2(4k+1)}(2-\alpha)\binom{\beta k + 1}{2}$.}
\medskip

\medskip
\noindent (G) {\sl The number of green-green crossings in $A(\alpha,\beta,n)$ is $(4k+1) \binom{2\alpha k+1}3$.}
\medskip

\medskip
\noindent (H) {\sl The number of brown-brown crossings in $A(\alpha,\beta,n)$ is $(4k+1) \binom{2(2-\alpha) k}{3}$.}
\medskip

\medskip
\noindent (I) {\sl The number of blue-blue crossings in $A(\alpha,\beta,n)$ is $(4k+1) \binom{2(2-\beta)k}{3}$.}
\medskip

\begin{proof}[Proof of Claim~\ref{cla:number}]
In view of Remark~\ref{rem:rem1}, to obtain the number of crossings in $D(\alpha,\beta,n)$ it suffices to add the numbers in (A)--(H) plus half the number in (I). Thus

{\small 
\begin{align*}
&\pcr{D(\alpha,\beta,n})= 
\binom{4k+1}{4}+
(1/4)(4k+1)\binom{2\beta k}3+
4k(4k+1)(2-\alpha)\binom{2k-\beta k}2
\\ &+
(4k+1)(2\alpha k+1)\binom{\beta k}2+
2(4k+1)\left(\binom{2k+1}3-\binom{2(\alpha-1)k+1}3\right)+
2(4k+1)(2-\alpha)k\binom{\beta k + 1}{2}
\\ &+
(4k+1) \binom{2\alpha k+1}3+
(4k+1) \binom{2(2-\alpha) k}{3}+
(1/2)(4k+1) \binom{2(2-\beta)k}{3}.
\end{align*}
}

Recalling that $k=\frac{n-2}{8}$, a straightforward manipulation of this expression shows that the coefficient of $n^4$ in $\pcr{D(\alpha,\beta,n)}$ is $f(\alpha,\beta)$ in Claim~\ref{cla:number}.
\end{proof}

We conclude the section by proving statements (A)--(G). In the proofs, we say that a vertex $x$ is {\em incident} with a crossing if it is incident with one of the edges involved in the crossing. {With the exception of (A), the proofs of these statements involve counting (certain) crossings incident with either $v_0$ or $w_0$. For the reader's convenience, in Figure~\ref{fig:800} we illustrate some useful information on the edges incident with each of $v_0$ and $w_0$.}

\begin{figure}[ht!]
\centering
\scalebox{0.8}{\input{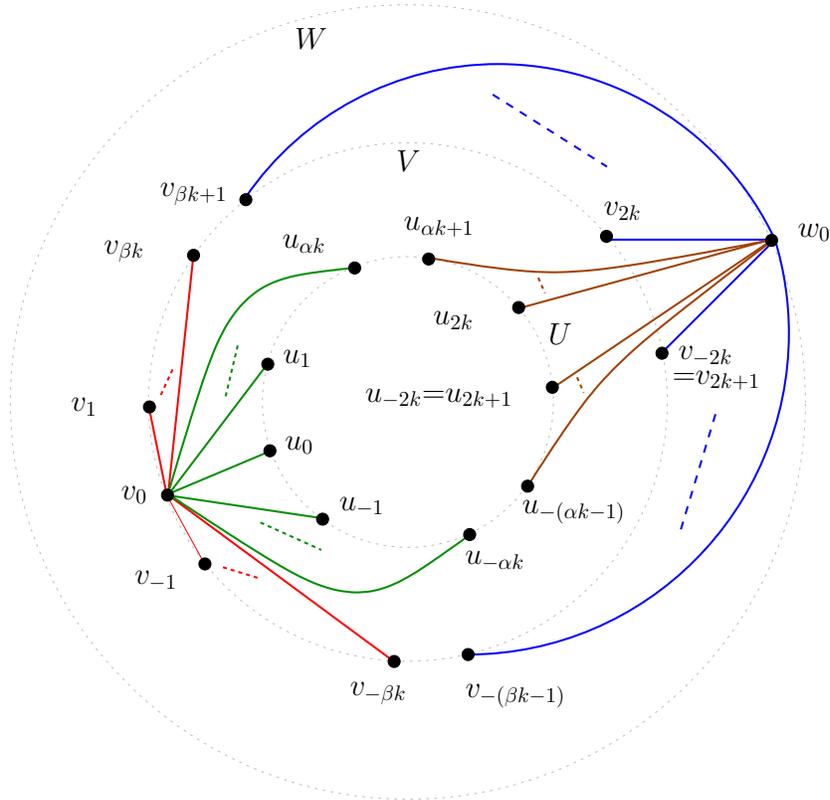}}
\caption{The green and red edges incident with $v_0$, and the brown and blue edges incident with $w_0$.}
\label{fig:800}
\end{figure}

\begin{proof}[Proof of (A)]
The total number of black-black crossings is $\binom{4k+1}{4}$, because there are $4k+1$ $u$-vertices, and every set of four $u$-vertices defines a black-black crossing.
\end{proof}

\begin{proof}[Proof of (B)]
We show that the number $X$ of crossings of red edges incident with $v_0$ by red edges is $\binom{2\beta k}3$. Since there are $4k+1$ $v$-vertices, and each red-red crossing involves four $v$-vertices, by symmetry this implies that the total number of red-red crossings in $A(\alpha,\beta,n)$ is $(1/4)(4k+1)\binom{2\beta k}3$, as claimed.

Let $h<j<c$ be from $\{1,2,\dots,2\beta k\}$.  Half these triples have $j\le \beta k$. For this half, we will associate a red-red crossing involving $v_0 v_j$ and an edge incident with $v_h$. Let $i$ be the $(c-j)^{{\textrm{th}}}$ element of the list $h-(\beta k+1),h-(\beta k+2),\dots,4k,j+1,j+2,\dots,h+ \beta k$.  In this way, the triple $(h,j,c)$ corresponds precisely to the crossing $v_0v_j$ with $v_hv_i$.  

If $j>\beta k$, then we set $h'=4k+1-c$, $j'=4k+1-j$, and $c'=4k+1-h$ to get the symmetric situation on the other side of $v_0$.  Then $v_0v_{j'}$ crosses $v_{h'}v_{i'}$, with $i'$ the $(c'-j')^{{\textrm{th}}}$ entry in the list $h'- (\beta k+1), \dots, j'- 1, 1,2,\dots,h'+\beta k$.  Therefore $X=\binom{2\beta k}3$, as claimed.
\end{proof}

\begin{proof}[Proof of (C)]
We show that the number of blue-brown crossings that involve a brown edge incident with $w_0$ is $4k(2-\alpha)\binom{(2-\beta)k}2$. Since there are $4k+1$ $w$-vertices, by symmetry this implies that the total number of blue-brown crossings in $A(\alpha,\beta,n)$ is $4k(4k+1)(2-\alpha)\binom{(2-\beta)k}2$, as claimed.

A blue-brown crossing of the brown edge $w_0u_j$ with the blue edge $w_hv_i$ occurs when either {(i)} $1\le h\le 2k-\beta k -1$ and $h+(\beta k+ 1)\le i\le 2k$, or {(ii)} $2k+\beta k +2\le h\le 4k$ and $h+ (2k+(2-\beta) k)\ge i\ge 2k+1$.  Notice that $j$ is irrelevant, so that each blue edge crossing one brown edge incident with $w_0$ crosses all $2(2-\alpha)k$ brown edges incident with $w_0$.  Thus, the problem boils down to counting the number of pairs $(h,i)$ {that satisfy (i) or (ii). We show that the number of pairs $Z$ that satisfy (i) is $\binom{(2-\beta)k}{2}$. Since the number of pairs that satisfy (ii) is also $Z$, in view of the previous observation it will follow that the number of blue-brown crossings that involve a brown edge incident with $w_0$ is $2Z\bigl( 2(2-\alpha)k\bigr) = 4k(2-\alpha)\binom{(2-\beta)k}2$, as claimed.} 


For each pair $h'<i$ from $\{\beta k+1,\beta k+2,\dots,2k\}$ we set $h=h'-\beta k$.  Clearly, $1\le h\le 2k-\beta k -1$ and, since $h'<i\le 2k$, $h+\beta k<i\le 2k$.  As $h+\beta k = h+2k-(2-\beta)k$, this is precisely the constraint on $i$ {in (i). Thus $Z=\binom{2k-\beta k}{2} = \binom{(2-\beta)k}{2}$, as claimed}. 
\end{proof}

\begin{proof}[Proof of (D)]
We show that the number $X$ of crossings of green edges incident with $v_0$ by red edges is $(2\alpha k+1)\binom{\beta k}2$. Since there are $4k+1$ $v$-vertices, by symmetry this implies that the total number of red-green crossings in $A(\alpha,\beta,n)$ is $(4k+1)(2\alpha k+1)\binom{\beta k}2$, as claimed.

We count the crossings of the green edge $v_0$ to $u_j$ with red edges $v_hv_i$.  As in the blue-brown case, a red edge crosses either all $2\alpha k+1$ or none of the green edges incident with $v_0$.  We choose the labelling of each pair $(i,h)$ of indices so that $h\in\{1,2,\dots,\beta k-1\}$ and $h- \beta k \le i\le 4k$.

For each pair $i'<h'$ chosen from $\{1,2,\dots,\beta k\}$, set $h'=\beta k+1-h$ and $i=4k+1-i'$.  Then $i=4k+1-i' > 4k+1-h' = (4k+1)- (\beta k+1 -  h) = h-(\beta k+1)$, as required.  It follows that there are $\binom{\beta k}2$ pairs $(i,h)$, and so $X=(2\alpha k+1)\binom{\beta k}2$, as claimed.
\end{proof}

\begin{proof}[Proof of (E)]
We show that the number of green-brown crossings that involve a brown edge incident with $w_0$ is $2\left(\binom{2k+1}3-\binom{2(\alpha-1)k+1}3\right)$. Since there are $4k+1$ $w$-vertices, by symmetry this implies that the total number of green-brown crossings in $A(\alpha,\beta,n)$ is $2(4k+1)\left(\binom{2k+1}3-\binom{2(\alpha-1)k+1}3\right)$, as claimed.

Suppose $\alpha\ge 1$. Our aim is to compute {the} number of green-brown crossings that involve a brown edge incident with $w_0$. Half of these crossings involve $w_0u_i$ and $v_hu_j$ for which 
\[h\in {\{2,\dots,}2k\},\] 
\[i\in \{2k-(2-\alpha)k+1,2k-(2-\alpha)k+2,\dots,2k,2k+1,\dots,2k+(2-\alpha)k\}\,,\] and
\[j\in \{i+1,i+2\dots,h,h+1,\dots,h+\alpha k\}\,.\]

That is, $i\in\{\alpha k+1,\alpha k+2,\dots, (4-\alpha)k\}$ and $j\in \{i+1,i+2,\dots,h+\alpha k\}$.  We encode the possibilities $(i,j,h)$ by triples $i<j<c$ from $\{\alpha k+1,\alpha k+2,\dots, \alpha k + 2k+1\}$.  The $i$ and $j$ give the $u_i$ and $u_j$, while $c$ yields $h$ by the formula $h=c-\alpha k -1$.  Obviously, there are $\binom{2k+1}3$ triples. 

For any $j$ and $c$ with $j<c$, we have that $\alpha+2\le j\le c-1 = (h+\alpha k+1) -1=h+\alpha k$.  The upper bound $j\le h+\alpha k$ is precisely the upper limit on the green neighbours of $v_h$.  On the other hand, $j\ge \alpha k+2$ and $h\le 2k$.  Since $v_{2k}$ has $u_{2k-\alpha k}$ as a neighbour and $\alpha\ge 1$, $(2-\alpha)k<\alpha k+2$, so all the possibilities for $j$ and $c$ yield a green edge.

The brown edge $w_0u_i$ requires $i\in \{\alpha k+1,\dots,(4-\alpha)k\}$, so the triples with $i>(4-\alpha)k$ do not count.  There are $\binom{(\alpha k+2k+1-(4-\alpha)k}3$ triples with $i>(4-\alpha)k$.  Thus, the number of brown-green crossings incident with $w_0$ is $2\left(\binom{2k+1}3-\binom{2(\alpha-1)k+1}3\right)$, as claimed. A very similar argument applies to $\alpha<1$, starting with $v_0$, $w_h$, $u_i$, $u_j$.
\end{proof}

\begin{proof}[Proof of (F)]
We show that the number of red-brown crossings incident with $w_0$ is $2k(2{-}\alpha)$ $\binom{\beta k+1}{2}$. Since there are $4k+1$ $w$-vertices, and each red-brown crossing involves exactly one $w$-vertex, by symmetry it follows that the number of red-brown crossings in $A(\alpha,\beta,n)$ is $2k(4k+1)(2-\alpha)\binom{\beta k+1}{2}$, as claimed.

Each red-brown crossing incident with $w_0$ involves $v_i$ and $v_j$ with $i\in \{2k, 2k-1, \dots,2k-\beta k+1\}$ and $j\in \{2k+1,2k+2,\dots,i+\beta k\}$.  Each such $v_iv_j$ crosses all the brown edges incident with $w_0$.

Let $i'<j'$ be from $\{0,1,2\dots,\beta k\}$.   We choose $i=2k-i'$ and $j=i+j'=2k-i'+j'$.  Then $i\in \{2k,2k-1,\dots,2k-\beta k+1\}$ and $2k+1\le j\le i+\beta k$.  
 
Since $w_0$ has degree $2(2-\alpha)k$, there are $2(2-\alpha)k\binom{\beta k + 1}2$ brown-red crossings with brown edge incident with $w_0$, as claimed.  
\end{proof}

\begin{proof}[Proof of (G)]
We show that the number $X$ of green-green crossings incident with $v_0$ is $2\binom{2\alpha k+1}3$. Since there are $4k+1$ $v$-vertices, and each green-green crossing involves two $v$-vertices, by symmetry it follows that the number of green-green crossings in $A(\alpha,\beta,n)$ is $(1/2)(4k+1)X = (4k+1) \binom{2\alpha k+1}3$, as claimed.

To calculate $X$, we show that the number $Y$ of green-green crossings of the form $v_0u_j$ with $v_hu_i$ that have $h\in \{1,2,\dots,2k\}$ is $\binom{2\alpha k+1}3$. Since clearly $Y=X/2$, this implies that $X=2\binom{2\alpha k+1}3$.

In the case $\alpha\le 1$, there are no green-green crossings of the form $v_hu_i$ for $i\ge h$ and $h\ge \alpha k$.  (This is different in the case $\alpha> 1$, which we treat below).  The crossing $v_0u_j$ with $v_hu_i$ occurs if and only if $h\in \{1,2,\dots,2\alpha k-1\}$, $i\in \{h-\alpha k,h-\alpha k+1,\dots, \alpha k-1\}$, and $j\in \{i+1,\dots,\alpha k\}$.    Such a triple $(h,i,j)$ corresponds to the triple $(h,i+\alpha k+1,j+\alpha k+1)$ (so $h<i<j$) and this latter triple is from $\{1,2,\dots,2\alpha k+1\}$.  Thus, there are $\binom{2\alpha k+1}3$ triples, and so $Y=\binom{2\alpha k+1}3$, as claimed.

For $\alpha>1$, the situation is somewhat different. Recall that the goal is to calculate the number $Y$ of green-green crossings of the form $v_0u_j$ with $v_hu_i$ that have $h\in \{1,2,\dots,2k\}$. For $h+\alpha k \le (4-\alpha)k+1$, there are only the crossings as identified in the $\alpha\le 1$ case; these also occur up to $h=2k$.  However, for $h+\alpha k> (4-\alpha)k+1$, we also have the crossings $v_0u_j$ with $v_hu_i$ for $i\in\{h+\alpha k,h+\alpha k-1,\dots,(4-\alpha)k\}$ and $j\in\{i-1,i-2,\dots,(4-\alpha)k+1\}$.

The former apply for all $h=1,2,\dots,2k$, while the latter apply only for $h=2(2-\alpha)k+2,2(2-\alpha)k+3,\dots,2k$.  
For the former triples $(h,j,i)$, set $h'=2\alpha k+2-h$, $j'=\alpha k-j$, and $i'=\alpha k-i$.   We see that $1\le h\le 2k$ implies $2\alpha k+1\ge h' \ge 2(\alpha-1)k+2$.  Also $h -\alpha k \le i \le \alpha k-1$ implies $1\le i'\le \alpha k -h+\alpha k  = 2\alpha k -h = (2\alpha k - h)  + (2-2) = h'-2$, as expected.
Similarly, $h-\alpha k+1 \le j \le \alpha k$ implies $2\le j'\le h'-1$, so the triple $(h,i,j)$ corresponds to a triple $i'<j'<h'$ in $\{1,2,\dots,2\alpha k+1\}$ with $h' \ge 2(\alpha-1)k+2$.

For the remaining triples $(h,j,i)$, $2(2-\alpha)k+2\le h\le 2k$,   $4k+2-\alpha k\le i\le h+\alpha k$, and $4k+1-\alpha k\le j<i$.      We set  $h'=h+1-2(2-\alpha)k$, $i'=i-(4k-\alpha k)$, and $j'=j-(4k-\alpha k)$.   Then $2(2-\alpha)k+2\le h\le 2k$ translates to $ 3\le h'\le 2(\alpha-1)k+1$.  For $i'$, $4k+2-\alpha k \le i\le h+\alpha k$ translates to $ 2\le i' \le h+\alpha k - (4k- \alpha k) = h-2(2-\alpha)k= h'-1$.  Finally, for $j'$, $4k+1-\alpha k\le j< i $ translates to $1\le j'< i'$.  That is, the remaining triples $(h,i,j)$ correspond precisely with the triples $j'<i'<h'$ from $\{1,2\dots,2\alpha k+1\}$ having $h'\le 2(\alpha-1)k+1$.  Thus $Y=\binom{2\alpha k +1}3$, as claimed.
\end{proof}

\ignore{
{\small 
\begin{align*}
&\pcr{D(\alpha,\beta,n})=
(4k+1) \binom{2\alpha k+1}{3} +
\frac{4k+1}{2} \binom{2(2-\beta)k}{3} + (4k+1) \binom{2(2-\alpha) k}{3} + \frac{4k+1}{4} \binom{2\beta k}{3} \\ &+ 
\binom{4k+1}{4} +
(4k+1)(2\alpha k+1)\binom{\beta k}{2} + 2k(4k+1)(2-\alpha)\binom{\beta k + 1}{2} + 4k(4k+1)(2-\alpha) \binom{(2-\beta)k}{2}
 \nonumber \\ &+
2(4k+1)\left(\binom{2k+1}3-\binom{2(\alpha-1)k+1}3\right).
\end{align*}
}
}

\section{Concluding remarks}\label{sec:remarks}

We finally explain why a glance at Koman's calculations revealed to us that his constructions could be adapted to yield drawings with fewer crossings, and we explore further refinements of our own constructions and their impact on obtaining better estimates for $c_P$.

\subsection{The responsibility of vertices in crossing-minimal drawings of complete graphs} 

We recall that the {\em responsibility} of a vertex $v$ in a drawing $D$ is the number of crossings on edges incident to $v$. Many results on crossing numbers of complete graphs and complete bipartite graphs rely crucially on observations about the responsibility of vertices in crossing-minimal drawings. See for instance~\cite{Christian,guycrn,Kleitman,DanBruce}, and Sections 1.2 and 1.3 in~\cite{MarcusBook}.

Essentially the same arguments that show that the plane constant $c_{\real^2}$ and the projective plane constant $c_P$ exist, show that for each surface $\Sigma$ the constant $c_\Sigma:=\lim_{n\to \infty} cr_\Sigma(K_n)/n^4$ exists. There is a simple proof that the following holds for each surface $\Sigma$:

\begin{observation}\label{obs:rem1}
For each $n$, let $r_n$ be the responsibility of any vertex in a crossing-minimal drawing of $K_n$ in $\Sigma$.  Then $c_\Sigma=\lim_{n\to\infty}r_n/4 n^3$.
\end{observation}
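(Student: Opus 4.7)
The plan is a simple double-counting argument. Since $K_n$ is a simple graph, the two edges forming any crossing of a drawing $D$ of $K_n$ have four pairwise distinct endpoints, so summing the responsibility $r(v)$ over the vertices of $D$ counts each crossing of $D$ exactly four times. This yields the identity
\[
\sum_{v\in V(K_n)} r(v)\;=\;4\,cr(D).
\]
Applied to a crossing-minimal drawing $D_n$ of $K_n$, so that $cr(D_n)=cr_\Sigma(K_n)$, the average vertex responsibility equals $4\,cr_\Sigma(K_n)/n$, and dividing by $4n^3$ gives exactly $cr_\Sigma(K_n)/n^4$, which tends to $c_\Sigma$ by the definition of $c_\Sigma$.

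To interpret the statement for a specific vertex, I would sandwich the responsibility between the average and the deletion bound. For any $v\in V(D_n)$, the drawing $D_n-v$ has $cr_\Sigma(K_n)-r(v)$ crossings and is a drawing of $K_{n-1}$, so
\[
r(v)\;\le\;cr_\Sigma(K_n)-cr_\Sigma(K_{n-1}).
\]
Summing this inequality over the $n$ vertices and using the double-counting identity reproduces the classical inequality $(n-4)\,cr_\Sigma(K_n)\ge n\,cr_\Sigma(K_{n-1})$, i.e., the sequence $cr_\Sigma(K_n)/\binom{n}{4}$ is non-decreasing. Since it is bounded above (for instance by a convex-position straight-line drawing), it converges, and the limit is $24c_\Sigma$. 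Combining monotonicity with convergence gives the asymptotic $cr_\Sigma(K_n)-cr_\Sigma(K_{n-1})\sim 4c_\Sigma n^3$.

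Putting the pieces together, the responsibility of any chosen vertex lies between the average $4\,cr_\Sigma(K_n)/n\sim 4c_\Sigma n^3$ (bounding the maximum from below and the minimum from above) and the deletion bound $cr_\Sigma(K_n)-cr_\Sigma(K_{n-1})\sim 4c_\Sigma n^3$ (bounding all responsibilities from above). Dividing by $4n^3$ and invoking the squeeze then yields $r_n/(4n^3)\to c_\Sigma$. The main obstacle in making the individual-vertex statement fully rigorous is the tightness of the asymptotic $cr_\Sigma(K_n)-cr_\Sigma(K_{n-1})\sim 4c_\Sigma n^3$: the lower bound $\ge 4c_\Sigma n^3+o(n^3)$ is immediate from the non-decreasing property, whereas the matching upper bound requires combining the monotonicity and convergence of $cr_\Sigma(K_n)/\binom{n}{4}$ with some care. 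Once this tightness is in place, the observation is a direct consequence of the four-endpoint double-counting identity.
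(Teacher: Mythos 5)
The paper states this observation without proof, so your proposal has to stand on its own, and it has a genuine gap at exactly the point you flag and then wave away. Writing $f(n)=\Cr_\Sigma(K_n)$ and $g(n)=f(n)/\binom{n}{4}$, your claim that $f(n)-f(n-1)\sim 4c_\Sigma n^3$ follows from the monotonicity and convergence of $g$ ``with some care'' is not correct: one has
\[
f(n)-f(n-1)=g(n)\tbinom{n-1}{3}+\bigl(g(n)-g(n-1)\bigr)\tbinom{n-1}{4},
\]
and convergence of $g$ only makes the second term $o(n^4)$, not $o(n^3)$. A non-decreasing convergent sequence may satisfy $g(n)-g(n-1)\asymp 1/n$ along a sparse subsequence (e.g.\ jumps of size $1/n$ at $n=2^k$), in which case $f(n)-f(n-1)$ exceeds $4c_\Sigma n^3$ by a constant factor infinitely often. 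So no purely numerical manipulation of the sequence $f(n)$ can close this; a further geometric input is required. There is a second, related gap: the statement concerns \emph{any} vertex, and your sandwich gives no lower bound on the responsibility of an arbitrary vertex --- the average only bounds $r_{\min}$ from \emph{above}, and deducing $r_{\min}\ge (1-o(1))\,4c_\Sigma n^3$ from $r(v)\le f(n)-f(n-1)$ plus the average would require $f(n)-f(n-1)-4f(n)/n=o(n^2)$, which is far stronger than anything you have established.

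Both gaps are closed by the one idea missing from your proposal: a vertex-duplication construction. Given a crossing-minimal drawing $D$ of $K_n$ and any vertex $v$, place a new vertex $v'$ in a face incident with $v$ and route each edge $v'u$ alongside $vu$; the new edges pick up $r(v)$ crossings from the edges not incident with $v$, plus $O(n^2)$ crossings near $v$ and among themselves. This yields $f(n+1)\le f(n)+r(v)+O(n^2)$, i.e.\ $r(v)\ge f(n+1)-f(n)-O(n^2)$ for \emph{every} vertex $v$. Combined with your (correct) deletion bound $r(v)\le f(n)-f(n-1)$ and the double-counting identity $\sum_v r(v)=4f(n)$, one gets $f(n+1)-f(n)-O(n^2)\le 4f(n)/n\le f(n)-f(n-1)$, which pins down $f(n)-f(n-1)\sim 4c_\Sigma n^3$ and simultaneously squeezes every $r(v)$ between two quantities asymptotic to $4c_\Sigma n^3$. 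Your double-counting identity and deletion bound are correct and are indeed two of the three ingredients; the duplication construction is the third, and it cannot be omitted.
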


Thus, roughly speaking, {\em all vertices in a crossing-minimal drawing of $K_n$ have essentially the same responsibility}. 

As we have mentioned, Koman's drawings~\cite{koman} are a particular instance of our drawings $D(\alpha,\beta,n)$: they are obtained by setting $\alpha=\beta=1$. In this case, as reported in~\cite[Eq.~(23)]{koman}, half the vertices have responsibility $\frac{19}{384}n^3+O(n^2)$, and the other half have responsibility $\frac{20}{384}n^3+O(n^2)$. Thus Koman's construction implies that $\pcr{K_n} \le (1/4)\bigl((n/2)(\frac{19}{384}n^3+O(n^2)) + (n/2)(\frac{20}{384}n^3 + O(n^2))\bigr) = \frac{39}{3072}n^4 + O(n^3)$, and so $c_P \le \frac{39}{3072}$.

In general, if we have drawings of $K_n$ in a surface $\Sigma$ in which half the vertices have responsibility $p\cdot n^3 + O(n^2)$, and the other half have responsibility $q\cdot n^3 + O(n^2)$, it easily follows that $c_\Sigma \le (1/8)(p+q)$. However, the following simpler variant of Observation~\ref{obs:rem1}, which is also not difficult to prove, claims that this last inequality must be strict if $p\neq q$. 

\begin{observation}\label{obs:rem2}
Let $\Sigma$ be a surface. Suppose that for each positive integer $n$ there is a drawing of $K_n$ in $\Sigma$ in which half the vertices have responsibility $p\cdot n^3 + O(n^2)$, and the other half have responsibility $q\cdot n^3 + O(n^2)$, where $p$ and $q$ are constants, $p\neq q$. Then $c_\Sigma < (1/8)(p+q)$.
\end{observation}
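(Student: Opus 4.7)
The plan is to argue by contradiction: assume $c_\Sigma = (p+q)/8$ and (without loss of generality) that $p<q$, and derive $q\le p$. Call the $n/2$ low-responsibility vertices of the given drawing $D_n$ of $K_n$ \emph{type A} and the remaining $n/2$ vertices \emph{type B}. For $i\in\{0,1,2,3,4\}$, let $X_i$ be the number of crossings of $D_n$ whose four endpoints include exactly $i$ type-$B$ vertices. Summing the type-$A$ and type-$B$ responsibilities yields
\begin{align*}
4X_0+3X_1+2X_2+X_3 &= \tfrac{p}{2}n^4+O(n^3),\\
X_1+2X_2+3X_3+4X_4 &= \tfrac{q}{2}n^4+O(n^3),
\end{align*}
and so $\sum_i X_i = \tfrac{p+q}{8}n^4+O(n^3)$.

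The key step is an averaging argument. For each $\alpha\in[0,1]$ with $k:=\lfloor\alpha n/2\rfloor$, average over the $\binom{n/2}{k}$ subsets $B'\subseteq B$ of size $k$ the crossing count of the induced subdrawing $D_n[A\cup B']$. A crossing with exactly $i$ type-$B$ endpoints survives in $D_n[A\cup B']$ with probability $\binom{k}{i}/\binom{n/2}{i}=\alpha^i+O(1/n)$, so the average equals $\sum_{i=0}^4 \alpha^i X_i+O(n^3)$, and some specific $B'$ yields a drawing of $K_{\lfloor(1+\alpha)n/2\rfloor}$ in $\Sigma$ with at most this many crossings. Since $cr_\Sigma(K_m)/m^4\to c_\Sigma$ by definition of $c_\Sigma$, dividing by $n^4$ and passing to a subsequence along which $X_i/n^4\to\xi_i$, I obtain
\[
\tfrac{c_\Sigma(1+\alpha)^4}{16} \le \xi_0+\alpha\xi_1+\alpha^2\xi_2+\alpha^3\xi_3+\alpha^4\xi_4 \qquad\text{for all }\alpha\in[0,1].
\]

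The limits $\xi_i\ge 0$ inherit $\sum_i\xi_i=(p+q)/8$ and $\sum_{i=1}^4 i\xi_i=q/2$. Under the contradiction hypothesis $c_\Sigma=(p+q)/8$, both sides of the displayed inequality equal $c_\Sigma$ at $\alpha=1$, so the polynomial $h(\alpha):=\sum_{i=0}^4 \xi_i\alpha^i-c_\Sigma(1+\alpha)^4/16$ is non-negative on $[0,1]$ with $h(1)=0$. This forces $h'(1)\le 0$, and a direct calculation gives
\[
h'(1)=\sum_{i=1}^4 i\xi_i - c_\Sigma\cdot\tfrac{4\cdot 2^3}{16} = \tfrac{q}{2}-2c_\Sigma = \tfrac{q}{2}-\tfrac{p+q}{4} = \tfrac{q-p}{4}.
\]
Thus $q\le p$, contradicting $p<q$, so $c_\Sigma<(p+q)/8$.

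The main obstacle is detecting the strict inequality: purely zeroth-order arguments --- bounding $cr_\Sigma(K_n)$ by $cr(D_n)$, or by the crossing count of $D_n[A]$ --- yield only $c_\Sigma\le(p+q)/8$ and $c_\Sigma\le 2\min(p,q)$, which fail to separate whenever $p<q<15p$. The averaging step is engineered precisely so that $X_i$ enters with weight $\alpha^i$, reducing the problem to a polynomial-touching problem at $\alpha=1$ whose first-order boundary behaviour encodes the strict gap. The remaining technicalities (extracting a convergent subsequence for $X_i/n^4$ and rounding $\alpha n/2$ to the integer $k$) are routine.
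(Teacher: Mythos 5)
Your argument is correct, and it fills a genuine gap: the paper only asserts that Observation~\ref{obs:rem2} ``is also not difficult to prove'' and never supplies an argument, so there is no in-paper proof to compare against. Your route --- classifying crossings by how many of their four endpoints lie in the high-responsibility class $B$, averaging the crossing count of $D_n[A\cup B']$ over random $k$-subsets $B'\subseteq B$ to get $c_\Sigma(1+\alpha)^4/16\le\sum_i\alpha^i\xi_i$, and then extracting the strict gap from the sign of the derivative of $h(\alpha)=\sum_i\xi_i\alpha^i-c_\Sigma(1+\alpha)^4/16$ at the touching point $\alpha=1$ --- is sound, and the computation $h'(1)=q/2-2c_\Sigma=(q-p)/4$ is right. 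The derivative trick is a nice way to avoid choosing an explicit $\alpha_0<1$ and estimating $\xi_0,\dots,\xi_4$ individually. Two housekeeping points you should make explicit: fix a \emph{single} subsequence along which all five ratios $X_i/n^4$ converge (they are bounded by $\mathrm{cr}(D_n)/n^4$), so that the inequality holds for all $\alpha\in[0,1]$ simultaneously rather than along an $\alpha$-dependent subsequence; and note that the drawings may be taken to be good drawings, so that every crossing has four distinct endpoints and the responsibility sums $4X_0+3X_1+2X_2+X_3$ and $X_1+2X_2+3X_3+4X_4$ are exact. You are also right that the zeroth-order bounds ($c_\Sigma\le(p+q)/8$ from the full drawing, $c_\Sigma\le 2\min(p,q)$ from $D_n[A]$) cannot give strictness in general, so some interpolation between the two vertex classes, such as your averaging step, is genuinely needed.
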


From this observation it follows that Koman's drawings are not asymptotically optimal. Elkies's recent result motivated us to refine Koman's construction and work out the corresponding calculations.

\subsection{Further generalizations of Koman's construction}
To show that the projective plane constant $c_P$ is strictly smaller than $1/8\pi^2$, we proved the inequality $c_P < 0.0126$. Our aim was only to show that $c_P < 1/8\pi^2$, but we could have easily obtained a better upper bound than $c_P < 0.0126$. Indeed, it is not difficult to see that
\[
c_P \le \min_{\alpha,\beta\in[0,2]} \,\, f(\alpha,\beta),
\]
where $f(\alpha,\beta)$ is as in Claim~\ref{cla:number}. Our bound $c_P < 0.0126$ follows since $f(1.1,1) < 0.0126$, but one can easily find values $\alpha_0$ and $\beta_0$ such that $f(\alpha_0,\beta_0)<f(1.1,1)$. 

We decided against pushing this further for three reasons. First, as we have mentioned, our chief goal was to show that $c_P < 1/8\pi^2$. Second, in order to obtain better estimates for $\min_{\alpha,\beta\in[0,2]} \,\, f(\alpha,\beta)$ one must use numerical tools, as $f(\alpha,\beta)$ is a polynomial of degree three in two variables, and there does not seem to be a closed expression in terms of $\alpha$ and $\beta$ in which $\min_{\alpha,\beta\in[0,2]} \,\, f(\alpha,\beta)$ is attained. 

The third reason, and the most important one, is that we {\em know} that $c_P$ is strictly smaller than $\min_{\alpha,\beta\in[0,2]} \,\, f(\alpha,\beta)$. Indeed, our refinement of Koman's construction admits a further generalization that yields even better estimates for $c_P$. One can prescribe that the number of $w$-vertices (which is the number of $v$-vertices, as they get identified to obtain $D(\alpha,\beta,n)$) in $A(\alpha,\beta,n)$ is not necessarily the same as the number of $u$-vertices.

Such a generalized construction involves three parameters: besides $\alpha$ and $\beta$, the additional parameter is the ratio between the number of $w$-vertices and the number of $u$-vertices. We worked out the calculations to obtain the number of crossings in these drawings, and verified that for suitable choices of these parameters they have strictly fewer crossings than $D(\alpha,\beta,n)$.

For some time we entertained the possibility of presenting this more general construction, but we ended up realizing this was not a good idea, for several reasons. First of all, the description of the construction itself is slightly more complicated than the description of the drawings $D(\alpha,\beta,n)$. The calculations to obtain the number of crossings in these drawings, although still totally elementary, naturally are also longer. Moreover, the improvement on $c_P$ is very marginal (we could show, for instance, that $c_P < 0.012547$).

Most importantly, all these considerations would have been overcome, and we would have chosen to present the full generalization of Koman's construction, if we had any reason to believe that these constructions yield drawings of $K_n$ in the projective plane that are (at least asymptotically) crossing-minimal, but we have no such reasons. However, we do find it intriguing that these constructions yield estimates of $c_P$ that are still very close to $1/8\pi^2$: they are not even $1\%$ better. Can one come up with drawings of $K_n$ in the projective plane that show that $c_P$ is substantially smaller than $1/8\pi^2$?

\section*{Acknowledgements}

{We thank two reviewers for their corrections and suggestions on the original version of this paper.} This project has received funding from NSERC Grant 50503-10940-500 and from the European Union's Horizon 2020 research and innovation programme under the Marie Sk\l{}odowska-Curie grant agreement No 754411, IST, Klosterneuburg, Austria.

\section*{Data Availability Statement}

Data sharing not applicable to this article as no datasets were generated or analysed during the current study.

\bibliographystyle{abbrv}
\bibliography{refs.bib}

\end{document}